\documentclass[10pt,a4paper,reqno]{amsart}
\usepackage{amssymb}
\usepackage{amsmath}
\usepackage{enumerate}
\usepackage{graphicx}
\usepackage{tjmm}
\usepackage{multirow}

\usepackage[utf8x]{inputenc}

\begin{document}

\title[Reduced-order modelling based on Koopman operator theory]{Reduced-order modelling based on Koopman operator theory}
\author{Diana A. Bistrian, Gabriel Dimitriu, Ionel M. Navon}
\address{
 (D.A. Bistrian) University Politehnica of Timisoara\newline
\indent Department of Electrical Engineering and Industrial Informatics\newline
\indent Revolutiei Nr.5, 331128 Hunedoara, Romania}
\email{Corresponding author: diana.bistrian@upt.ro}

\address{
 (G. Dimitriu) University of Medicine and Pharmacy “Grigore T. Popa”\newline
\indent Department of Mathematics and Informatics\newline
\indent Universitatii 16, 700115 Iasi, Romania}
\email{gabriel.dimitriu@umfiasi.ro}

\address{
 (I.M. Navon) Florida State University\newline
\indent Department of Scientific Computing\newline
\indent Dirac Science Library Building, Tallahassee, FL 32306-4120, USA}
\email{Inavon@fsu.edu}

\setcounter{page}{17}
\coordinates{15}{2023}{1-2}{17-27}

\subjclass[2010]{ 93A30, 70K75,  65C20.}
\keywords{Koopman operator, reduced-order model, shallow-water equations}

\begin{abstract}

The present study focuses on a subject of significant interest in fluid dynamics: the identification of a model with decreased computational complexity from numerical code output using Koopman operator theory. A reduced-order modelling method that incorporates a novel strategy for identifying the most impactful Koopman modes was used to numerically approximate the Koopman composition operator.

\end{abstract}

\maketitle

\section{Introduction}

Despite the fact that complex nonlinear dynamical systems can appear challenging to understand, the existence of similar flow characteristics indicates that a number of different dynamic phenomena are likely governed by the same fundamental processes. Using the modal decomposition method \cite{Holmes1996, Kaiser2018, ChKutz2019} is an effective strategy to find a helpful low-dimensional reference frame for capturing prominent dynamical processes. Model order reduction approaches based on modal decomposition have made significant improvements in the recent decade \cite{Arcucci2023, Mou2023, Iliescu2022, Daba2023, Sanfilippo2023}.

The choice of an appropriate reduced order basis to describe the system dynamics in relation to the system order reduction is the main topic of this study. 
The Koopman operator theory \cite{Koopm1931} provides the mathematical foundation for determining the reduced-order model of a complex nonlinear dynamical system. 

There are various advantages to transposing the nonlinear dynamics into a reduced-order model, which is a linear model by construction. These include the ability to identify the dominant frequencies, the production of a mathematical reduced-order model with higher fidelity, and a notable increase in computing speed.

The authors have made a substantial contribution to the advancement of modal decomposition techniques \cite{BistrianDimitriuNavon2019, BistrianDimitriuNavon2020a, BistrianDimitriuNavon2020b} and the introduction of new numerical algorithms for the modeling of nonlinear dynamical systems with lower computational complexity \cite{BistrianNavon2017b, Bistrian2022a, Bistrian2022b}  in the past years.

The present work contains a presentation of the mathematical aspects of modal decomposition technique based on Koopman operator theory \cite{Koopm1931}, with application to the Saint-Venant nonlinear dynamical system model.

The remainder of the article is organized as follows.
Section 2 discusses the mathematical considerations on the Koopman operator theory.
Section 3 presents the numerical method developed for reduced-order modelling.
Section 4 presents the test problem, consisting in nonlinear Saint-Venant equations dynamical model.  A qualitative study of the reduced-order model is conducted in the case of two experiments.
A summary and conclusions are given in Section 5.

\section{Mathematical considerations on the Koopman operator}
 
Let $\Omega  \subset {\mathbb{R}^n}$ be a compact and non-empty space. 
Let
\begin{equation}
{L^2}\left( \Omega  \right) = \left\{ {\psi :\Omega  \to \mathbb{R}{{\left| {\;\int_\Omega  {\left| \psi  \right|} } \right.}^2}d\Omega  < \infty } \right\}
\end{equation}
be a Hilbert space of square integrable functions on  $\Omega $, endowed with the inner product $\left\langle {{\psi _i},{\psi _j}} \right\rangle  = \int_\Omega  {{\psi _i}} {\psi _j}d\Omega $ and the norm $\left\| \psi  \right\| = \sqrt {\left\langle {\psi ,\psi } \right\rangle } $ for $\psi  \in {L^2}\left( \Omega  \right)$.

Let us consider a nonautonomous continuous-time dynamical system on  domain $\Omega  \subset {\mathbb{R}^n}$ governed by the nonlinear ordinary differential equation
\begin{equation}\label{system1}
\left\{ \begin{array}{l}
\frac{{dy}}{{dt}}\left( {{\bf{x}},t} \right) = f\left( {y,u,t} \right),\quad t \in {\mathbb{R}_{ \ge 0}}\\
y\left( {{\bf{x}},{t_0}} \right) = {y_0}\left( {\bf{x}} \right)
\end{array} \right.
\end{equation}
where the map $f$ is locally Lipschitz continuous, ${\bf{x}} \in {\mathbb{R}^n}$ is the Cartesian coordinate vector, $u \in {\mathbb{R}^m}$ is the input vector, with $n \gg m$. Forward invariance of the set $\Omega  \subset {\mathbb{R}^n}$ w.r.t. system dynamics (\ref{system1}) is assumed, i.e. any solution $y\left( {\textbf{x},t} \right) \in \Omega ,\;t \ge 0$ holds for all ${y_0}$.

\begin{definition}
\textbf{Dimensionality reduction in reduced-order modelling.}
The principle of modal reduction aims to finding an approximation solution of the form
\begin{equation}\label{rom1}
\left\{ \begin{array}{l}
y\left( {{\bf{x}},t} \right) \approx \sum\limits_{j = 1}^p {{a_j}\left( t \right){\psi _j}\left( {\bf{x}} \right)} ,\quad t \in {\mathbb{R}_{ \ge 0}}\\
\frac{{d{a_j}\left( t \right)}}{{dt}} = g\left( t \right),\;{a_j}\left( {{t_0}} \right) = a_j^0
\end{array} \right.
\end{equation}
expecting that this approximation becomes exact as $p \to \infty $, assuring preservation of dynamic stability, computational stability, and a small global approximation error compared to the true solution of (\ref{system1}).
\end{definition}

Let us consider a scalar observable function $\varphi :\Omega  \to \mathbb{C}$, $u = \varphi \left( y \right)$, $y \in \Omega $, $t \in {\mathbb{R}_{ \ge 0}}$  with a smooth and Lipschitz continuous flow ${F^t}:\Omega  \to \Omega $:
\begin{equation}\label{flow}
{F^t}\left( {{y_0}} \right) = {y_0} + \int_{{t_0}}^{{t_0} + t} {f\left( {y\left( \tau  \right)} \right)d\tau },
\end{equation}
which is forward-complete, i.e. the flow ${F^t}\left( y \right)$ has a unique solution on ${\mathbb{R}_{ \ge 0}}$ from any initial condition ${y_0}$.

The system class that fits the aforementioned assumption is quite vast and encompasses a wide range of physical systems, including whirling flows, shallow water flows, convection-diffusion processes, and so on.

The Koopman operator describes the propagation of state space observables over time. An observable might be any sort of system measurement or the dynamical reaction of the system. The recurrence of a fixed time-t flow map, i.e. sequential compositions of the map with itself, is assumed to describe the dynamical development. 

\begin{definition}
\textbf{Koopman operator.}
For dynamical systems of type (\ref{system1}), the semigroup of Koopman operators ${\left\{ {{{\mathcal K}^t}} \right\}_{t \in {\mathbb{R}_{ \ge 0}}}}:\Omega  \to \Omega $ acts on scalar observable functions $\varphi :\Omega  \to \mathbb{C}$ by composition with the flow semigroup ${\left\{ {{F^t}} \right\}_{t \in {\mathbb{R}_{ \ge 0}}}}$ of the vector field $f$:
\begin{equation}\label{koop}
{{\mathcal K}^t}\varphi  = \varphi \left( {{F^t}} \right).
\end{equation}
The Koopman operator is also known as the composition operator.
\end{definition}

\begin{proposition}
\textbf{Linearity of the Koopman operator.}
Consider the Koopman operator ${{\mathcal K}^t}$ and two observables ${\varphi _1},{\varphi _2} \in \Omega $ and the scalar $\alpha  \in \mathbb{R}$. Using (\ref{koop}) it follows that:
\begin{equation}\label{linear}
{{\mathcal K}^t}\left( {\alpha {\varphi _1} + \beta {\varphi _2}} \right) = \left( {\alpha {\varphi _1} + \beta {\varphi _2}} \right)\left( {{F^t}} \right) = \alpha {\varphi _1}\left( {{F^t}} \right) + \beta {\varphi _2}\left( {{F^t}} \right) = \alpha {{\mathcal K}^t}{\varphi _1} + \beta {{\mathcal K}^t}{\varphi _2}.
\end{equation}
\end{proposition}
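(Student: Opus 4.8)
The plan is to reduce the claim to the defining property of ${\mathcal K}^t$ as a composition operator, equation (\ref{koop}), together with the elementary fact that evaluation at a point is a linear functional. First I would record the well-definedness: because $\Omega$ is forward invariant under (\ref{system1}) and the flow ${F^t}:\Omega \to \Omega$ is forward-complete, the composition $\varphi \circ F^t$ is defined for every observable $\varphi:\Omega \to \mathbb{C}$, and the linear combination $\alpha\varphi_1 + \beta\varphi_2$ is again such an observable; hence both sides of (\ref{linear}) are legitimate elements of the space of observables.

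Next I would evaluate the left-hand side at an arbitrary point $y \in \Omega$. By (\ref{koop}), $\bigl({\mathcal K}^t(\alpha\varphi_1+\beta\varphi_2)\bigr)(y) = (\alpha\varphi_1+\beta\varphi_2)\bigl(F^t(y)\bigr)$. Since $F^t(y)$ is a single point of $\Omega$ and the vector-space operations on $\mathbb{C}$-valued functions are defined pointwise, $(\alpha\varphi_1+\beta\varphi_2)\bigl(F^t(y)\bigr) = \alpha\,\varphi_1\bigl(F^t(y)\bigr) + \beta\,\varphi_2\bigl(F^t(y)\bigr)$. Applying (\ref{koop}) once more to each summand, the right-hand side equals $\alpha\,({\mathcal K}^t\varphi_1)(y) + \beta\,({\mathcal K}^t\varphi_2)(y)$. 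As $y$ was arbitrary the two functions coincide on all of $\Omega$, which is precisely (\ref{linear}).

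There is essentially no obstacle here: the entire content is that precomposition with a fixed map does not interact with the vector-space structure on the target, which lives pointwise. The only step that deserves to be stated with care is the well-definedness above — that the hypotheses on $\Omega$ and on the flow keep $F^t(y)$ inside $\Omega$, so that ${\mathcal K}^t$ really acts on the space of observables. I would also remark in passing that the identical computation shows ${\mathcal K}^t$ fixes the zero observable and is therefore a genuine linear operator on any observable space closed under composition with $F^t$ (for instance $L^2(\Omega)$ when $F^t$ is measure-preserving), but for the proposition as stated the pointwise argument is entirely sufficient.
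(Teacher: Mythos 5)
Your argument is correct and is essentially the paper's own: the proposition's justification there is exactly the displayed chain of equalities, i.e.\ precomposition with $F^t$ distributes over pointwise linear combinations. You merely make explicit the pointwise evaluation at $y \in \Omega$ and the well-definedness via forward invariance, which the paper leaves implicit.
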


\begin{definition}
\textbf{Infinitesimal generator.}
Let us assume that there is a generator ${{\mathcal G}_{\mathcal K}}:{\mathcal F} \to \Omega $, ${\mathcal F}$ being the domain of the generator and $\Omega $ the Banach space of observables. The operator ${{\mathcal G}_{\mathcal K}}$ stands as the infinitesimal generator of the time-t indexed semigroup of Koopman operators ${\left\{ {{{\mathcal K}^t}} \right\}_{t \in {\mathbb{R}_{ \ge 0}}}}$, i.e.
\begin{equation}\label{gen}
{{\mathcal G}_{\mathcal K}}\varphi  = \mathop {\lim }\limits_{t \searrow 0} \frac{{{{\mathcal K}^t}\varphi  - \varphi }}{t} = \frac{{d\varphi }}{{dt}}.
\end{equation}
\end{definition}

\begin{definition}
\textbf{Koopman eigenfunction.}
An observable $\phi  \in \Omega $ is called a Koopman eigenfunction if it satisfies the relation:
\begin{equation}\label{koopeig}
{{\mathcal G}_{\mathcal K}}\phi \left( y \right) = \frac{{d\phi }}{{dt}}\left( y \right) = s\phi \left( y \right),
\end{equation}
associated with the complex eigenvalue $s \in \mathbb{C}$.
\end{definition}

\begin{definition}
\textbf{Koopman mode.}
Let ${\phi _i} \in \Omega $ be an eigenfunction for the Koopman operator, corresponding to eigenvalue ${\lambda _i}$. For an observable $\varphi :\Omega  \to \mathbb{C}$, the Koopman mode corresponding to ${\phi _i}$ is the projection of $\varphi $ onto $span\left\{ {{\phi _i}} \right\}$. 
\end{definition}

\begin{theorem}
\textbf{Koopman Spectral Decomposition.}
Any observable $\varphi :\Omega  \to \mathbb{C}$  admits a Koopman spectral decomposition of the following form:
\begin{equation}\label{ksd}
\varphi \left( y \right) = \sum\limits_{j = 1}^\infty  {{a_j}\left( \varphi  \right)\lambda _j^t} {\phi _j},
\end{equation}
where $\lambda _j^t = {e^{{s_j}t}}$ w.r.t. ${s_j} = {\sigma _j} + i{\omega _j}$ with eigen-decay/growth ${\sigma _j}$ and eigenfrequencies ${\omega _j}$.
\end{theorem}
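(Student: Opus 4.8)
The plan is to build the decomposition in three stages: first establish that the Koopman eigenfunctions $\{\phi_j\}_{j\ge 1}$ furnish an expansion of an arbitrary observable, then determine how each eigenfunction evolves under ${\mathcal K}^t$, and finally assemble the two using linearity. For the first stage I would posit --- as is standard in this setting, and consistent with the Banach-space structure on $\Omega$ assumed in the definition of the infinitesimal generator --- that the Koopman eigenfunctions are complete in $\Omega$, so that any observable $\varphi:\Omega\to\mathbb{C}$ can be written $\varphi = \sum_{j=1}^\infty a_j(\varphi)\,\phi_j$, where the coefficients $a_j(\varphi)$ are precisely the Koopman modes, i.e. the projections of $\varphi$ onto $\mathrm{span}\{\phi_j\}$ from the Koopman-mode definition. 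When $\Omega = L^2(\Omega)$ and the $\phi_j$ are taken orthonormal, this gives the explicit formula $a_j(\varphi) = \langle \varphi,\phi_j\rangle$.

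For the second stage I would show that each eigenfunction evolves by a scalar exponential, ${\mathcal K}^t\phi_j = e^{s_j t}\phi_j$. Fix $j$ and put $u(t):={\mathcal K}^t\phi_j$, noting that $\phi_j$ lies in the domain ${\mathcal F}$ of the generator. Using the semigroup property, the commutation of ${\mathcal G}_{\mathcal K}$ with ${\mathcal K}^t$, and the eigenrelation ${\mathcal G}_{\mathcal K}\phi_j = s_j\phi_j$ of (\ref{koopeig}), one obtains $\tfrac{d}{dt}u(t) = {\mathcal G}_{\mathcal K}{\mathcal K}^t\phi_j = {\mathcal K}^t{\mathcal G}_{\mathcal K}\phi_j = s_j\,u(t)$ with $u(0)=\phi_j$; solving this scalar linear ODE yields $u(t)=e^{s_j t}\phi_j$, that is ${\mathcal K}^t\phi_j = \lambda_j^t\phi_j$ with $\lambda_j^t = e^{s_j t}$ and $s_j=\sigma_j+i\omega_j$.

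Combining the two stages, I would apply ${\mathcal K}^t$ to the expansion of $\varphi$ and invoke the linearity relation (\ref{linear}), extended to the countable sum by continuity of ${\mathcal K}^t$ on $\Omega$: ${\mathcal K}^t\varphi = \sum_{j=1}^\infty a_j(\varphi)\,{\mathcal K}^t\phi_j = \sum_{j=1}^\infty a_j(\varphi)\,\lambda_j^t\,\phi_j$. Since ${\mathcal K}^t\varphi = \varphi(F^t)$ by (\ref{koop}), reading this identity along the trajectory $y=F^t(y_0)$ produces the stated form (\ref{ksd}).

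I expect the real obstacle to be the completeness hypothesis used in the first stage. For a generic nonlinear system the Koopman operator can carry continuous spectrum, in which case the eigenfunctions alone do not span the observable space and (\ref{ksd}) should be understood as holding on the closed ${\mathcal K}^t$-invariant subspace spanned by $\{\phi_j\}$, or under an a priori assumption of pure point spectrum; identifying that subspace, and controlling the truncation error when only finitely many modes are retained (the $p\to\infty$ limit in the dimensionality-reduction definition), is where the analytic work lies. The remaining points --- membership $\phi_j\in{\mathcal F}$, the commutation ${\mathcal G}_{\mathcal K}{\mathcal K}^t={\mathcal K}^t{\mathcal G}_{\mathcal K}$, and the interchange of ${\mathcal K}^t$ with the infinite sum --- are routine given the smoothness, Lipschitz, and forward-completeness assumptions already imposed.
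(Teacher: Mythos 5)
Your proposal follows essentially the same route as the paper's proof: posit an eigenfunction expansion $\varphi=\sum_j a_j(\varphi)\phi_j$, establish the evolution law ${\mathcal K}^t\phi_j=\lambda_j^t\phi_j$ with $\lambda_j^t=e^{s_jt}$, and apply ${\mathcal K}^t$ termwise by linearity. The only difference is one of emphasis --- the paper motivates the richness of the eigenfunction family by assuming $\Omega$ is a Banach algebra (so products of eigenfunctions are again eigenfunctions) where you instead state a completeness hypothesis outright and honestly flag the continuous-spectrum caveat --- so the arguments coincide in substance.
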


\begin{proof}
The Koopman mode decomposition of form (\ref{ksd}), first provided in \cite{Mezic2005}, was later coupled with numerical approaches for modal decomposition, such as dynamic mode decomposition \cite{SchmidSesterhen2008, Schmid2010, Mezic2013, Tu2014}.
Since for any semigroup of Koopman operators ${\left\{ {{{\mathcal K}^t}} \right\}_{t \in {\mathbb{R}_{ \ge 0}}}}$, exists an infinitesimal generator ${{\mathcal G}_{\mathcal K}}$, the following relation is satisfied for any ${\lambda ^t} = {e^{st}}$:
\begin{equation}\label{relgen}
{{\mathcal K}^t}\phi \left( y \right) = \phi \left( {{F^t}\left( y \right)} \right) = {\lambda ^t}\phi \left( y \right).
\end{equation}

Let us consider that the space $\Omega $ is chosen to be a Banach algebra, i.e. the set of eigenfunctions forms an Abelian semigroup under product of functions. If ${\phi _1},{\phi _2} \in \Omega $ are two eigenfunctions of the composition operator ${{\mathcal K}^t}$ with eigenvalues ${\lambda _1},{\lambda _2}$, then the function product ${\phi _1}{\phi _2}$ is also an eigenfunction of ${{\mathcal K}^t}$ with the eigenvalue ${\lambda _1}{\lambda _2}$. Thus, products of eigenfunctions are, again, eigenfunctions. It follows that, for any  observable function written in the following form:
\begin{equation}
\varphi \left( y \right) = \sum\limits_{j = 0}^\infty  {{a_j}\left( \varphi  \right){\phi _j}\left( y \right)} ,
\end{equation}
the Koopman operator acts as follows:
\begin{equation}
{{\mathcal K}^t}\varphi  = \sum\limits_{j = 1}^\infty  {{a_j}\left( \varphi  \right)\left( {{{\mathcal K}^t}{\phi _j}} \right)}  = \sum\limits_{j = 1}^\infty  {{a_j}\left( \varphi  \right)\lambda _j^t} {\phi _j}.
\end{equation}
\end{proof}

\section{Reduced-order modelling based on Koopman operator}

Dynamic Mode Decomposition (DMD) \cite{SchmidSesterhen2008, Schmid2010,Tu2014, Bistrian2015} , is a data-driven approach for estimating the modes and eigenvalues of the Koopman operator without numerically executing a Laplace transform. DMD has emerged as a popular approach for finding spatial-temporal coherent patterns in high-dimensional data, with a strong connection to nonlinear dynamical systems via the Koopman mode theory \cite{Koopm1931, chenoptimal2012}. We present in the following an improved numerical algorithm based on dynamic mode decomposition.

Let us consider a set of observables in the following form:
\begin{equation}
{u_i}\left( {{\bf{x}},t} \right) = u\left( {{\bf{x}},{t_i}} \right),\quad {t_i} = i\Delta t,\quad i = 0,...,{N_t}
\end{equation}
at a constant sampling time $\Delta t$, ${\bf{x}}$ representing the spatial coordinates, whether Cartesian or Cylindrical. 

A data matrix whose columns represent the individual data samples, called the snapshot matrix, is constructed in the following manner:
\begin{equation}
V = \left[ {\begin{array}{*{20}{c}}
{{u_0}}&{{u_1}}&{...}&{{u_{{N_t}}}}
\end{array}} \right] \in {\mathbb{R}^{{N_x} \times ({N_t} + 1)}}
\end{equation}
Each column ${u_i}$ is a vector with ${N_x}$ components, representing the numerical measurements.

The Koopman decomposition theory assumes that an infinitesimal operator ${{\mathcal K}^t}$ exists that maps every vector column onto the next one:
\begin{equation}
\left\{ {{u_0},\;{u_1} = {{\mathcal K}^t}{u_0},\;{u_2} = {{\mathcal K}^t}{u_1} = {{\left( {{{\mathcal K}^t}} \right)}^2}{u_0},.\;..,\;{u_{{N_t}}} = {{\mathcal K}^t}{u_{{N_t} - 1}} = {{\left( {{{\mathcal K}^t}} \right)}^{{N_t}}}{u_0}} \right\}.
\end{equation}

Our aim is to build the best numerical approximation of the Koopman operator using the DMD technique. The next step consists in forming two data matrices from the observables sequence, in the form:
\begin{equation}
{V_0} = \left[ {\begin{array}{*{20}{c}}
{{u_0}}&{{u_1}}&{...}&{{u_{{N_t} - 1}}}
\end{array}} \right] \in {\mathbb{R}^{{N_x} \times {N_t}}},\;{V_1} = \left[ {\begin{array}{*{20}{c}}
{{u_1}}&{{u_2}}&{...}&{{u_{{N_t}}}}
\end{array}} \right] \in {\mathbb{R}^{{N_x} \times {N_t}}}.
\end{equation}

Assume that over a sufficiently long sequence of snapshots, the latest snapshot may be expressed as a linear combination of preceding vectors, so that:
\begin{equation}
{u_{{N_t}}} = {c_0}{u_0} + {c_1}{u_1} + ... + {c_{{N_t} - 1}}{u_{{N_t} - 1}} + {\mathcal R},
\end{equation}
where ${c_i} \in \mathbb{R},i = 0,...,N - 1$ and ${\mathcal R}$ is the residual vector. The following relations are true:
\begin{equation}\label{koopth1}
\left\{ {{u_1},{u_2},...{u_{{N_t}}}} \right\} = {{\mathcal K}^t}\left\{ {{u_0},{u_1},...{u_{{N_t} - 1}}} \right\} = \left\{ {{u_1},{u_2},...,{V_0}c} \right\} + {\mathcal R},
\end{equation}
where $c = {\left( {\begin{array}{*{20}{c}}
{{c_0}}&{{c_1}}&{...}&{{c_{N - 1}}}
\end{array}} \right)^T}$ is the unknown column vector. Eq.(\ref{koopth1}) is equivalent to the following relation:
\begin{equation}\label{koopth2}
{{\mathcal K}^t}{V_0} = {V_0}{\mathcal S} + {\mathcal R},\quad {\mathcal S} = \left( {\begin{array}{*{20}{c}}
0&{...}&0&{{c_0}}\\
1&{}&0&{{c_1}}\\
 \vdots & \vdots & \vdots & \vdots \\
0& \ldots &1&{{c_{{N_t} - 1}}}
\end{array}} \right),
\end{equation}
where ${\mathcal S}$ is the companion matrix.

The relationship (\ref{koopth2}) is true when the residual is minimized. It follows that the vector $c$ must be chosen such that ${\mathcal R}$ is orthogonal to $span\left\{ {{u_0},...,{u_{{N_t} - 1}}} \right\}$.
The goal of dynamic mode decomposition is to solve the eigenvalue problem of the companion matrix:
\begin{equation}\label{eigenprob}
{V_1} = {{\mathcal K}^t}{V_0} = {V_0}{\mathcal S} + {\mathcal R},
\end{equation}
where ${\mathcal S}$ approximates the eigenvalues of the Koopman operator ${{\mathcal K}^t}$ when ${\left\| {\mathcal R} \right\|_2} \to 0$.

As a direct result of resolving the minimization problem (\ref{eigenprob}), minimizing the residual enhances overall convergence, and so the eigenvalues and eigenvectors of ${\mathcal S}$ will converge toward the eigenvalues and eigenvectors of the Koopman operator, respectively. 

The advantage of this method is that the Koopman operator has an infinite number of eigenvalues, whereas its DMD approximation is linear and has a finite number of terms. 

Model reduction is highly dependent on the selection of dynamic modes. The superposition of all Koopman modes, weighted by their amplitudes and complex frequencies, approximates the whole data sequence, but some modes contribute insignificantly. In this research, we create a reduced-order model of the data that only includes the most important modes that make a substantial contribution to the representation of the data, which we refer to as the leading modes.

The data snapshots at every time step will be represented as a Koopman spectral decomposition of the form:
\begin{equation}\label{koopth3}
{u_{DMD}}\left( {{\bf{x}},{t_i}} \right) = \sum\limits_{j = 1}^{{N_{DMD}}} {{a_j}\left( {{t_i}} \right)\lambda _j^{i - 1}{\phi _j}\left( {\bf{x}} \right)} ,\quad {\mkern 1mu} i \in \left\{ {1,...,{N_t}} \right\},\quad {\mkern 1mu} {t_i} \in \left\{ {{{\rm{t}}_1},...,{{\rm{t}}_{{N_t}}}} \right\},
\end{equation}
where ${N_{DMD}} \ll {N_t}$ represents the number of Koopman leading modes ${\phi}\left( {\bf{x}} \right) $ involved in the spectral decomposition of data snapshots, ${\lambda _j}$ are the Koopman eigenvalues, and ${a_j} \in \mathbb{C}$ are the modal amplitudes of the Koopman modes, respectively.

 The leading modes indicate a subset of Koopman modes that will be chosen from all computed DMD modes using an original criterion, discussed in the following.

We define the weight of each Koopman mode as follows:
\begin{equation}\label{entropy}
{w\mathcal{K}_j} = \int\limits_{\Delta t}^{{t_{{N_t}}}} {\sum\limits_{i = 1}^{{N_t}} {{a_j}\left( t \right)\lambda _j^{i - 1}} } dt,
\end{equation}
where ${\lambda _j}$ are the Koopman eigenvalues, and ${a_j} \in \mathbb{C}$ are the modal amplitudes of the Koopman modes, respectively.

Let
\begin{equation}\label{error}
E{r_{DMD}} = \frac{{{{\left\| {u\left( {\bf{x}} \right) - {u_{DMD}}\left( {\bf{x}} \right)} \right\|}_2}}}{{{{\left\| {u\left( {\bf{x}} \right)} \right\|}_2}}},
\end{equation}
be the relative error of the difference between the variables of the full model and approximate DMD solutions over the exact one, where $u\left( {\bf{x}} \right)$ represents the full solution of the model and ${u_{DMD}}\left( {\bf{x}} \right)$ represents the reduced order solution. 

The leading dynamic modes and their related frequencies are chosen in descending order of the modal entropy, until a minimal relative error of the reduced-order model is obtained. To produce the reduced-order model amounts to finding the solution to the following minimisation problem:
\begin{equation}\label{minprob}
\left\{ \begin{array}{l}
Find\;{N_{DMD}} \in N,\;w.r.t.\quad {u_{DMD}}\left( {{\bf{x}},{t_i}} \right) = \sum\limits_{j = 1}^{{N_{DMD}}} {{a_j}{\phi _j}\left( {\bf{x}} \right)\lambda _j^{i - 1}} ,\\
\quad \quad i \in \left\{ {1,...,{N_t}} \right\},\quad {t_i} \in \left\{ {{{\rm{t}}_1},...,{{\rm{t}}_{{N_t}}}} \right\},\\
Subject\;to\quad \mathop {\arg \min }\limits_{{N_{DMD}}} \left\{ {{w\mathcal{K}_1} > {w\mathcal{K}_2} > ... > {w\mathcal{K}_{{N_{DMD}}}},\;E{r_{DMD}} \le \varepsilon } \right\}.
\end{array} \right.
\end{equation}

As a consequence, the modes and frequencies with the highest effect on approximation accuracy are selected to be included in the model with a reduced computational complexity.

\section{Reduced-order modelling of Saint-Venant equations model}

The test problem used in this paper consists of the nonlinear Saint-Venant equations (also called the shallow water equations \cite{SaintVenant}) in a channel on the rotating earth:
\begin{equation}\label{sw1}
\frac{{\partial \left( {\tilde u\widetilde h} \right)}}{{\partial t}} + \frac{{\partial \left( {{{\tilde u}^2}\widetilde h + g{{\widetilde h}^2}/2} \right)}}{{\partial x}} + \frac{{\partial \left( {\tilde u\tilde v\widetilde h} \right)}}{{\partial y}} = \widetilde h\left( {f\tilde v - g\frac{{\partial H}}{{\partial x}}} \right),
\end{equation}
\begin{equation}\label{sw2}
\frac{{\partial \left( {\tilde v\widetilde h} \right)}}{{\partial t}} + \frac{{\partial \left( {\tilde u\tilde v\widetilde h} \right)}}{{\partial x}} + \frac{{\partial \left( {{{\tilde v}^2}\widetilde h + g{{\widetilde h}^2}/2} \right)}}{{\partial y}} = \widetilde h\left( { - f\tilde u - g\frac{{\partial H}}{{\partial y}}} \right),
\end{equation}
\begin{equation}\label{sw3}
\frac{{\partial \widetilde h}}{{\partial t}} + \frac{{\partial \left( {\tilde u\widetilde h} \right)}}{{\partial x}} + \frac{{\partial \left( {\tilde v\widetilde h} \right)}}{{\partial y}} = 0,
\end{equation}
where $\tilde u$ and $\tilde v$ are the velocity components in the $\tilde x$ and $\tilde y$ axis directions respectively, $\tilde h$ represents the
depth of the fluid, $H\left( {x,y} \right)$ is the the orography field, $\tilde f$ is the Coriolis factor and $g$ is the acceleration of gravity.

The reference computational configuration is the rectangular $2D$ domain $\Omega = \left[ {0,{L_{\max }}} \right] \times \left[{0,{D_{\max }}} \right]$. Subscripts represent the derivatives with respect to time and the streamwise and spanwise coordinates.

The Coriolis parameter is modelled as varying linearly in the spanwise direction, such that
\begin{equation}\label{coriolis}
\widetilde f = {f_0} + \beta (\widetilde y - {D_{\max }}),
\end{equation}
where ${f_0},\beta $ are constants, ${L_{\max }},{D_{\max }}$ are the dimensions of the rectangular domain of integration.

The height of the orography is given by the fixed two-dimensional field
\begin{equation}\label{oro}
H\left( {x,y} \right) = \alpha {e^{{y^2} - {x^2}}}.
\end{equation}

The model (\ref{sw1})-(\ref{sw3}) is associated with periodic boundary conditions in the  $\tilde x$-direction and solid wall boundary condition in
the $\tilde y$-direction:
\begin{equation}\label{cond1}
\tilde u\left( {0,\tilde y,\tilde t} \right) = \tilde u\left( {{L_{\max }},\tilde y,\tilde t} \right),\;\tilde v\left( {\tilde x,0,\tilde t} \right) = \tilde v\left( {\tilde x,{D_{\max }},\tilde t} \right) = 0,
\end{equation}
and also with the initial Grammeltvedt type condition \cite{Grammeltvedt1969} as the initial height field, which propagates the energy in wave number one, in the
streamwise direction:
\begin{equation}\label{cond2}
{h_0}\left( {\tilde x,\tilde y} \right) = {H_0} + {H_1}\tanh \left( {\frac{{10({D_{\max }}/2 - \tilde y)}}{{{D_{\max }}}}} \right) + {H_2}\sin \left( {\frac{{2\pi \tilde x}}{{{L_{\max }}}}} \right){\cosh ^{ - 2}}\left( {\frac{{20({D_{\max }}/2 - \tilde y)}}{{{D_{\max }}}}} \right).
\end{equation}

Using the geostrophic relationship $\tilde u =  - {\tilde h_{\tilde y}}\left( {g/\tilde f} \right)$, $\tilde v = {\tilde h_{\tilde x}}\left(
{g/\tilde f} \right)$, the initial velocity fields are derived as:
\[{u_0}\left( {\tilde x,\tilde y} \right) =  - \frac{g}{{\tilde f}}\frac{{10{H_1}}}{{{D_{\max }}}}\left( {{{\tanh }^2}\left( {\frac{{5{D_{\max }} - 10\tilde y}}{{{D_{\max }}}}} \right) - 1} \right) - \]
\begin{equation}\label{cond3}
\frac{{18g}}{{\tilde f}}{H_2}\sinh \left( {\frac{{10{D_{\max }} - 20\tilde y}}{{{D_{\max }}}}} \right)\frac{{\sin \left( {\frac{{2\pi \tilde x}}{{{L_{\max }}}}} \right)}}{{{D_{\max }}{{\cosh }^3}\left( {\frac{{10{D_{\max }} - 20\tilde y}}{{{D_{\max }}}}} \right)}},
\end{equation}
\begin{equation}\label{cond4}
{v_0}\left( {\tilde x,\tilde y} \right) = 2\pi {H_2}\frac{g}{{\tilde f{L_{\max }}}}\cos {\mkern 1mu} \left( {\frac{{2\pi \tilde x}}{{{L_{\max }}}}} \right){\cosh ^{ - 2}}\left( {\frac{{20({D_{\max }}/2 - \tilde y)}}{{{D_{\max }}}}} \right).
\end{equation}

The constants used for the test problem are
\[{f_0} = {10^{ - 4}}{s^{ - 1}}, \quad  \alpha  = 4000, \quad \beta  = 1.5 \times {10^{ - 11}}{s^{ - 1}}{m^{ - 1}},\quad g = 9.81m{s^{ - 1}},\]
\[ {{\rm{D}}_{\max }}{\rm{ = 60}} \times {\rm{1}}{{\rm{0}}^3}{\rm{m,}}\quad {{\rm{L}}_{\max }}{\rm{ = 265}} \times {\rm{1}}{{\rm{0}}^3}{\rm{m}},  \]
\[{H_0} = 10 \times {\rm{1}}{{\rm{0}}^3}m,\quad {H_1} = -700m,\quad {H_2} = -400m. \]

The error of the numerical algorithm is set to be less than $\varepsilon=10^{-7}$. A non-dimensional analysis was performed to assess the performances of the reduced-order shallow water model. 
Reference quantities of the dependent and independent variables in the shallow water model are considered, i.e. the length scale
${L_{ref}} = {L_{\max }}$ and the reference units for the height and velocities, respectively, are given by the initial conditions ${h_{ref}} =
{h_0}$, ${u_{ref}} = {u_0}$. A typical time scale is also considered, assuming the form ${t_{ref}} = {L_{ref}}/{u_{ref}}$. 

In order to make the
system of equations (\ref{sw1})-(\ref{sw3}) non-dimensional, the non-dimensional variables
\begin{equation}\label{nondim}
\left( {t,x,y} \right) = \left( {\tilde
t/{t_{ref}},\tilde x/{L_{ref}},\tilde y/{L_{ref}}} \right),\quad \left( {h,u,v} \right) = \left( {\tilde h/{h_{ref}},\tilde u/{u_{ref}},\tilde
v/{u_{ref}}} \right)
\end{equation}
are introduced. 

The numerical results are obtained employing a Lax-Wendroff finite difference discretization scheme \cite{Brass2011} and used in further numerical experiments in dimensionless form. The training data
comprises a number of $289$ unsteady solutions of the two-dimensional shallow water equations model (\ref{sw1})-(\ref{sw3}), at regularly spaced time
intervals of $\Delta t = 1800s$ for each solution variable.

The numerical results of two tests illustrating the computing performance of the approach are presented below. In the first experiment, the threshold is set to be $\varepsilon  = {10^{ - 3}}$ for solving the optimization problem (\ref{minprob}). In the second experiment, the threshold is set at $\varepsilon  = {10^{ - 4}}$ for solving the optimization problem (\ref{minprob}).

Figures \ref{fig1}--\ref{fig3} present the spectrum of Koopman
decomposition eigenvalues, of geopotential height field $h$, streamwise field $u$ and spanwise field $v$, respectively, in the case of two experiments, and the leading Koopman modes selected by resolving the optimization problem (\ref{minprob}). In the second experiment, extra modes are selected (darker colored dots) to improve the reduced-order model precision.
\begin{figure}[h!]
\centering
a.\includegraphics[scale=0.4]{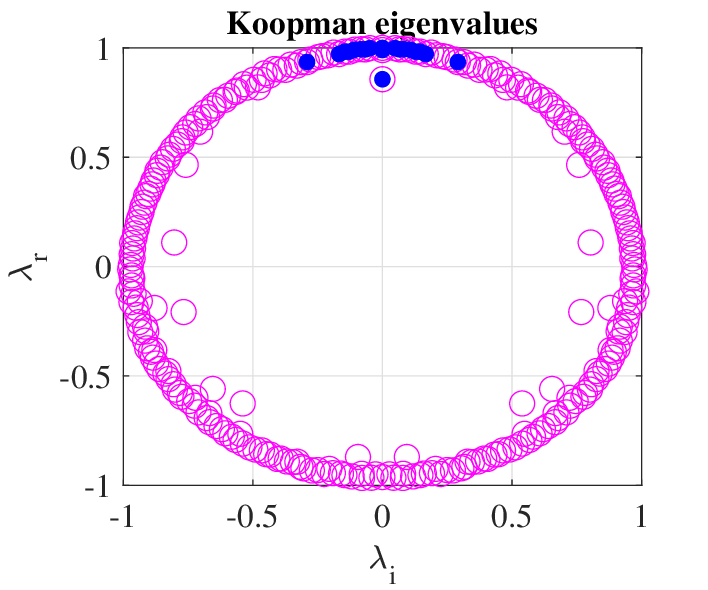}
b.\includegraphics[scale=0.4]{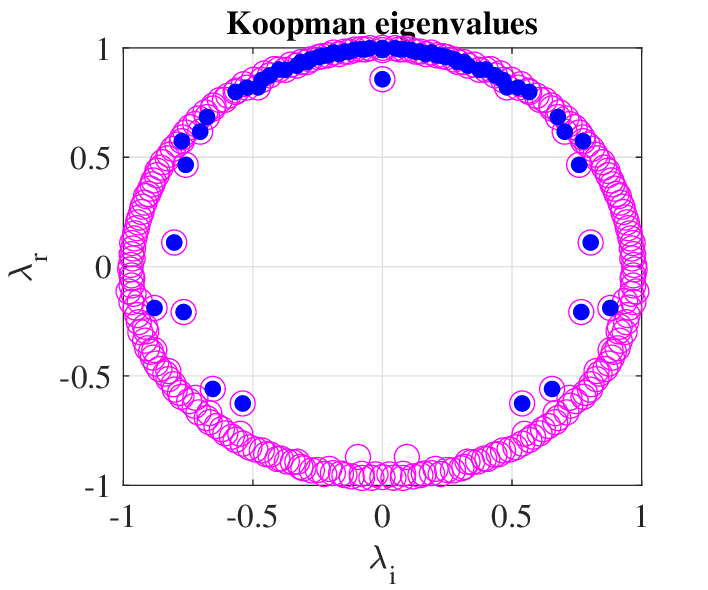}
\caption{The spectrum of Koopman decomposition of height field $h$: a) in the first experiment $\left( {\varepsilon  = {{10}^{ - 3}}} \right)$, $21$ leading modes are selected (darker colored dots); b) in the second experiment $\left( {\varepsilon  = {{10}^{ - 4}}} \right)$, $67$ leading modes are selected (darker colored dots) \label{fig1} }
\end{figure}
\begin{figure}[h!]
\centering
a.\includegraphics[scale=0.4]{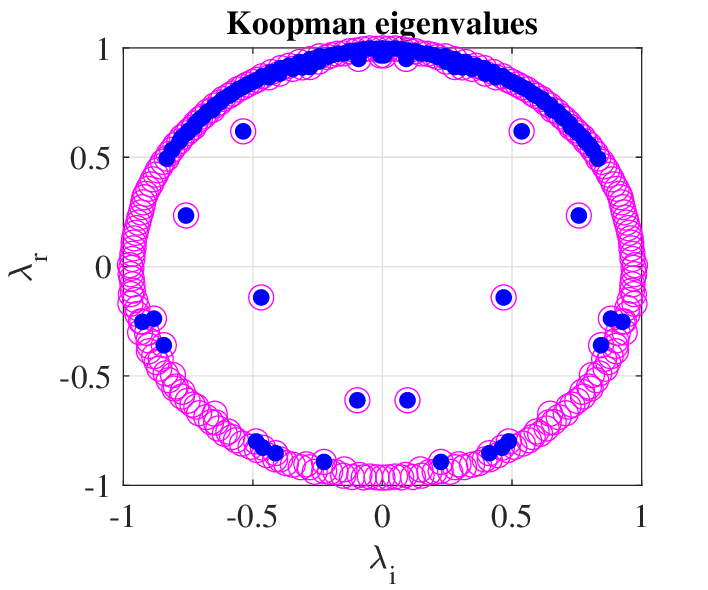}
b.\includegraphics[scale=0.4]{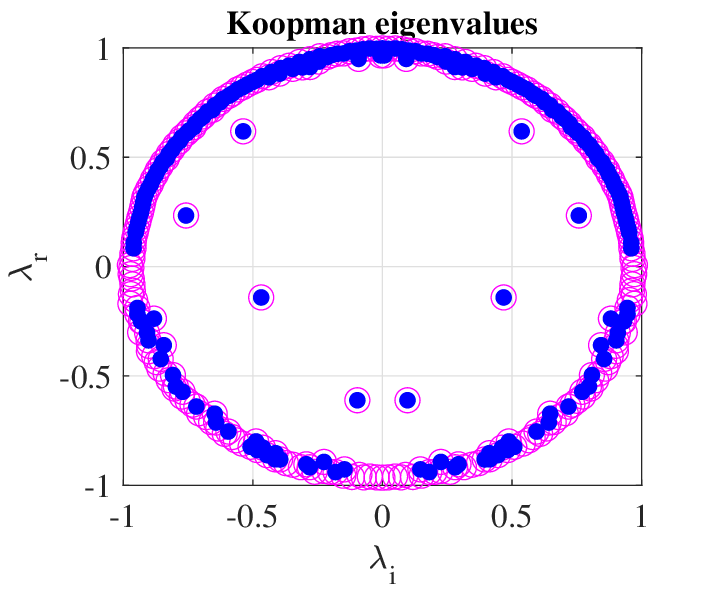}
\caption{The spectrum of Koopman decomposition of streamwise field $u$: a) in the first experiment $\left( {\varepsilon  = {{10}^{ - 3}}} \right)$, $116$ leading modes are selected (darker colored dots); b) in the second experiment $\left( {\varepsilon  = {{10}^{ - 4}}} \right)$, $199$ leading modes are selected (darker colored dots) \label{fig2} }
\end{figure}

\begin{figure}[h!]
\centering
a.\includegraphics[scale=0.4]{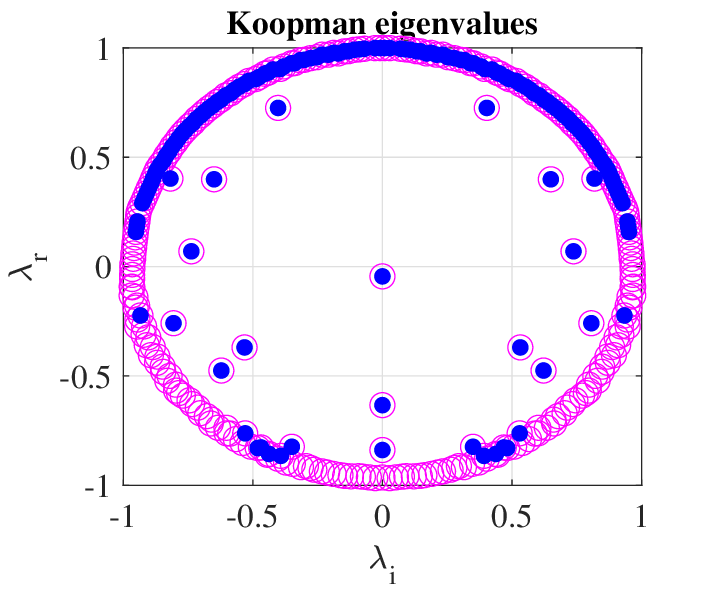}
b.\includegraphics[scale=0.4]{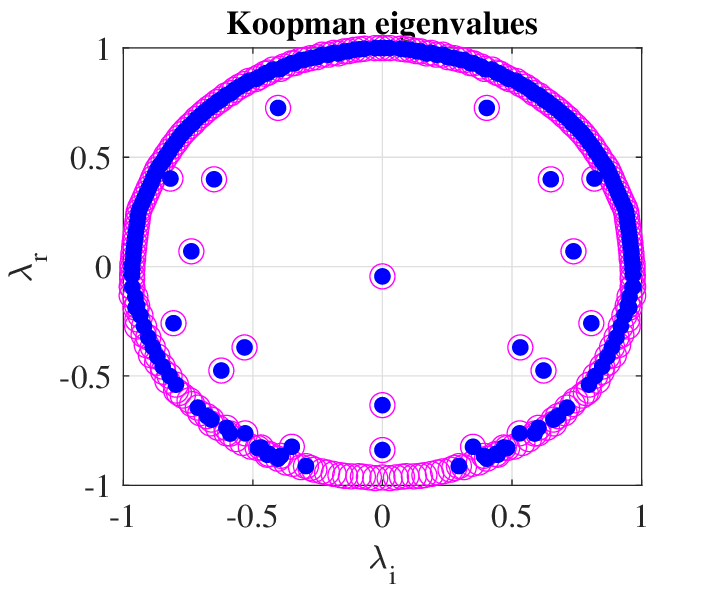}
\caption{The spectrum of Koopman decomposition of spanwise field $v$: a) in the first experiment $\left( {\varepsilon  = {{10}^{ - 3}}} \right)$, $151$ leading modes are selected (darker colored dots); b) in the second experiment $\left( {\varepsilon  = {{10}^{ - 4}}} \right)$, $212$ leading modes are selected (darker colored dots) \label{fig3} }
\end{figure}

The representation of the height field compared to its reduced-order model is displayed in Figures \ref{fig4}--\ref{fig5}, in the case of both experiments.
\begin{figure}[h!]
\centering
\includegraphics[scale=0.4]{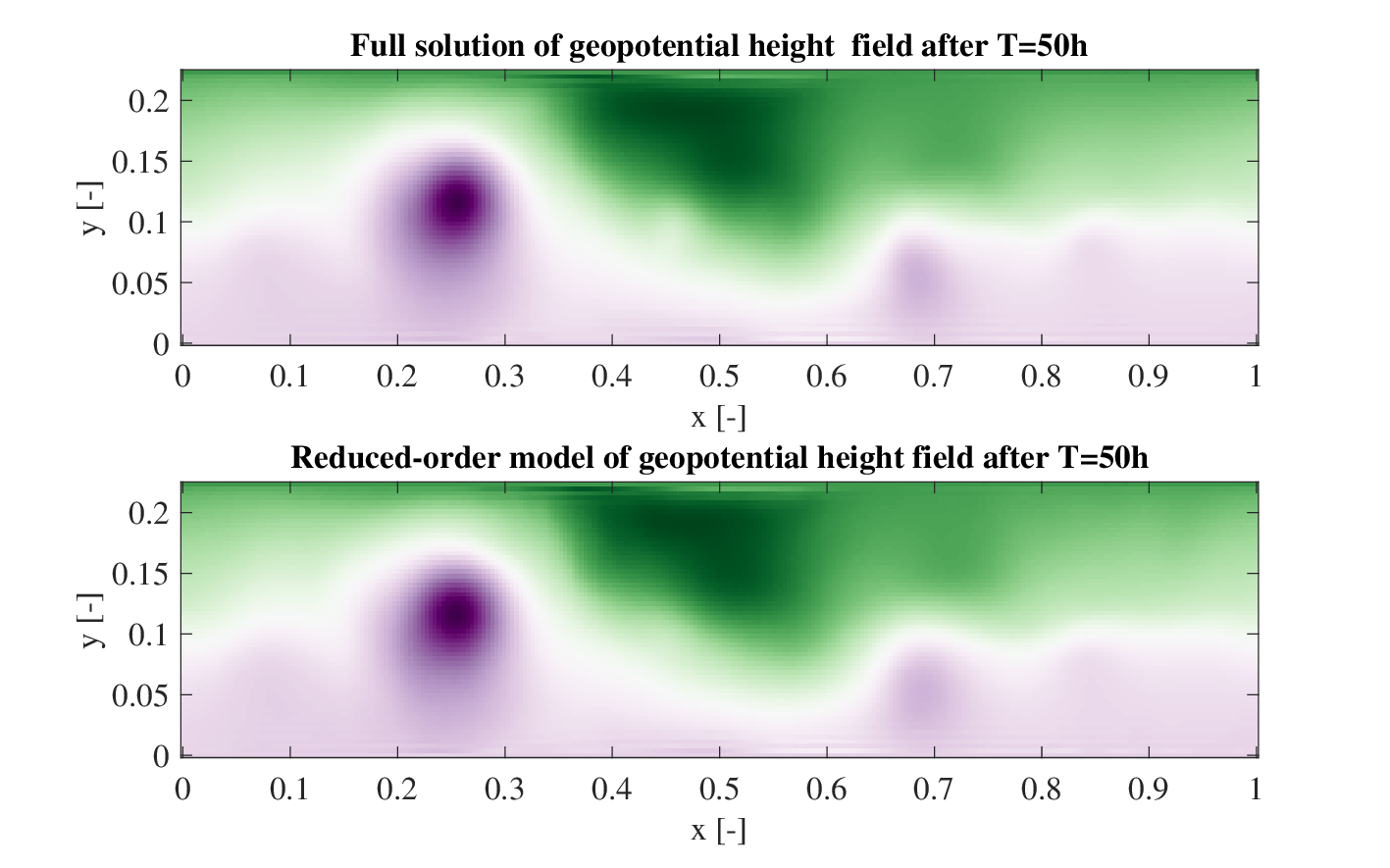}
\caption{Full solution of height field $u$ after $T=50h$, compared to its reduced-order model, in the case of the first experiment, the relative error is of order $\mathcal{O}\left( 10^{-3} \right)$\label{fig4} }
\end{figure}
\begin{figure}[h!]
\centering
\includegraphics[scale=0.4]{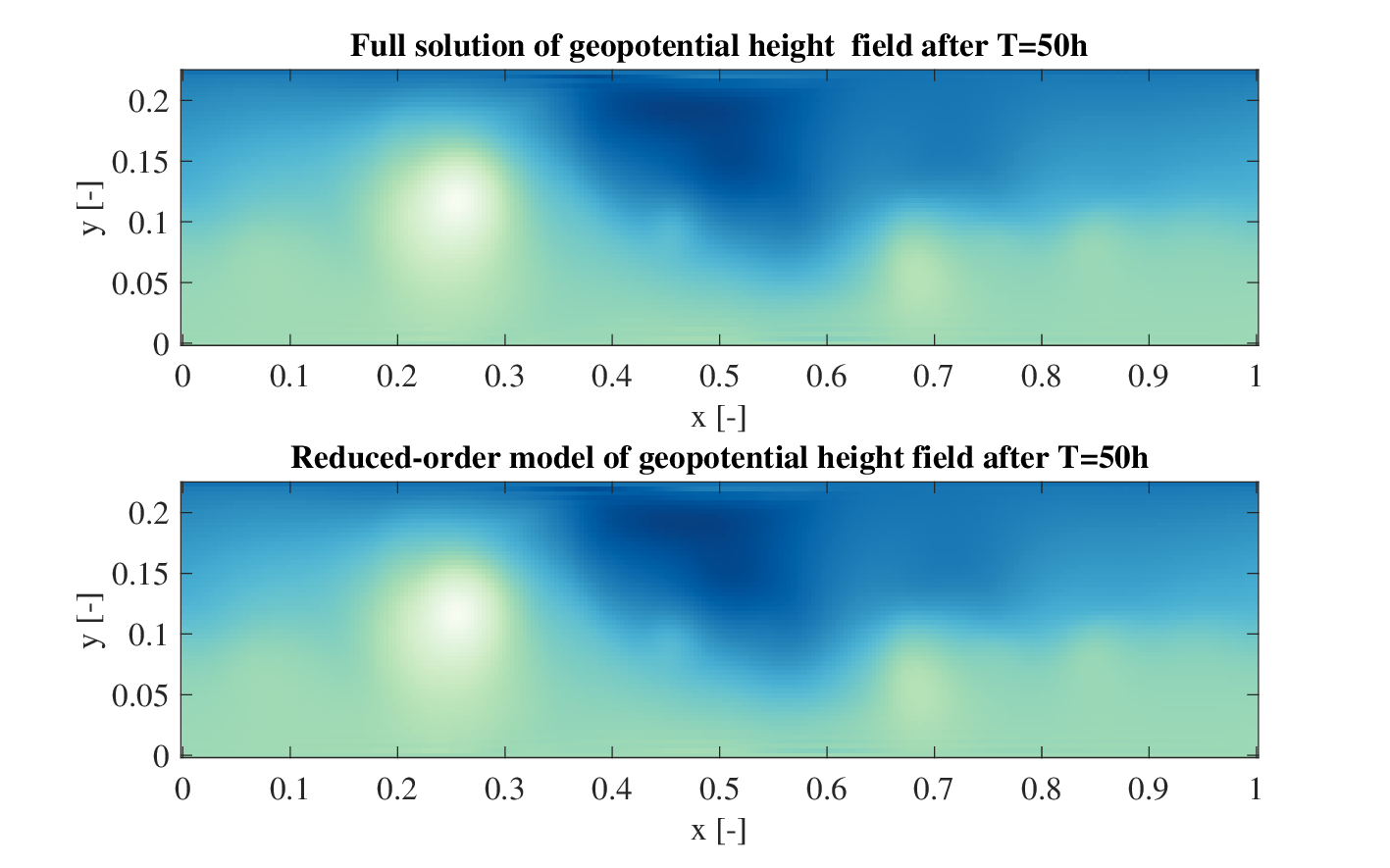}
\caption{Full solution of height field $u$ after $T=50h$, compared to its reduced-order model, in the case of the second experiment, the relative error is of order $\mathcal{O}\left( 10^{-4} \right)$\label{fig5} }
\end{figure}

The vorticity field compared to its reduced-order model is illustrated in Figures \ref{fig6}--\ref{fig7}, in the case of both experiments, at different time instances.
\begin{figure}[h!]
\centering
\includegraphics[scale=0.4]{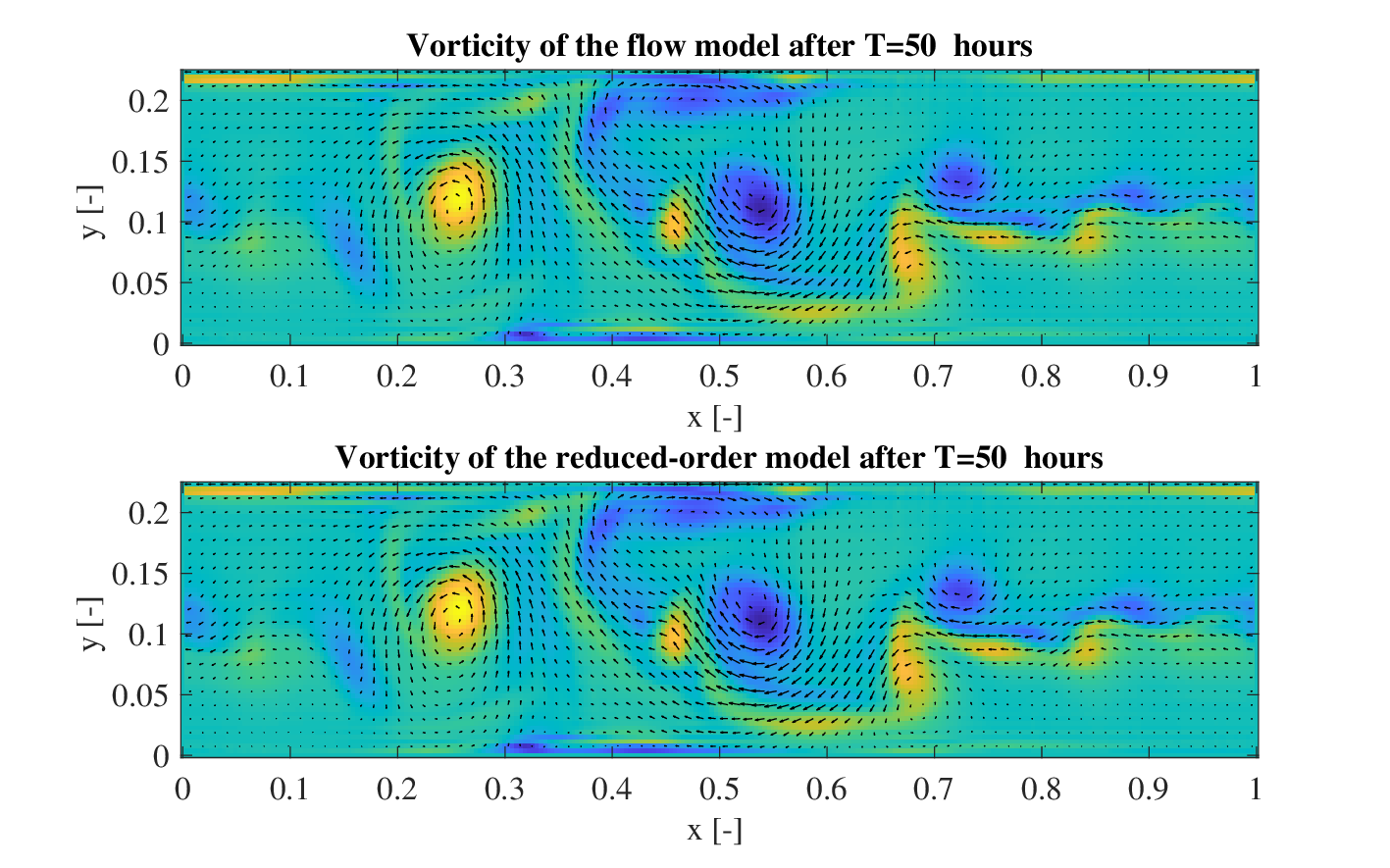}
\caption{Vorticity field after $T=50h$, compared to its reduced-order model, in the case of the first experiment, the relative error is of order $\mathcal{O}\left( 10^{-3} \right)$\label{fig6} }
\end{figure}
\begin{figure}[h!]
\centering
\includegraphics[scale=0.4]{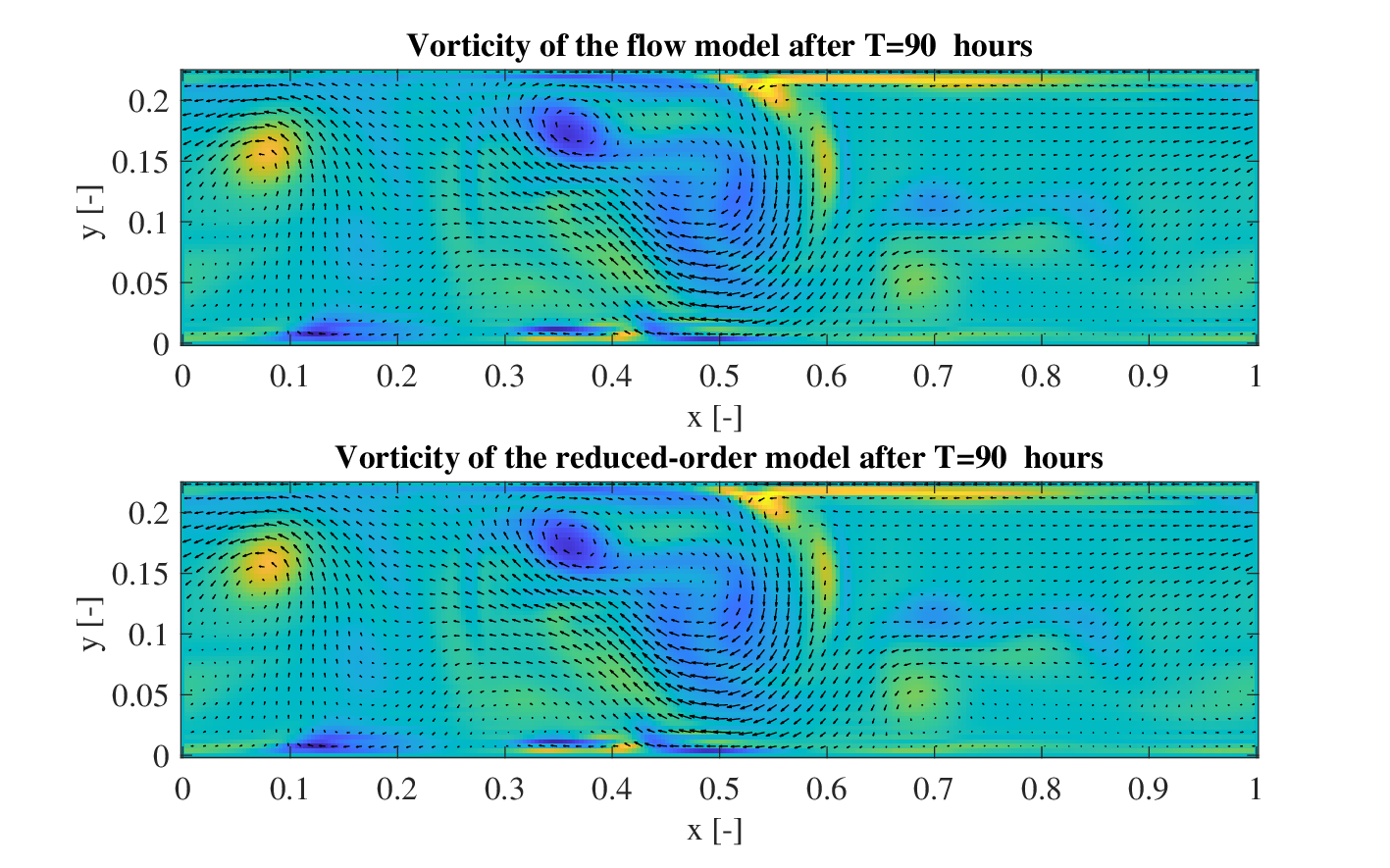}
\caption{Vorticity field after $T=90h$, compared to its reduced-order model, in the case of the second experiment, the relative error is of order $\mathcal{O}\left( 10^{-4} \right)$\label{fig7} }
\end{figure}

Table \ref{table1} presents the percentage reduction of the computational complexity of the reduced-order model, in the two experiments performed.

\begin{center}
\begin{table}[h!]
\caption{The percentage reduction of the computational complexity of the reduced-order model\label{table1}}
\noindent \fontsize{7}{8}\selectfont
\begin{tabular}{ | c | c | c |  c | }
\hline
\multirow{1}{*}{Full model } & \multirow{1}{*}{Full model } & \multirow{1}{*}{First test:} & \multirow{1}{*}{Second test:} \\
\multirow{1}{*}{components} & \multirow{1}{*}{ rank} & \multirow{1}{*}{Reduced-order rank,} & \multirow{1}{*}{Reduced-order rank,} \\
\multirow{1}{*}{ } & \multirow{1}{*}{ } & \multirow{1}{*}{Percentage reduction} & \multirow{1}{*}{Percentage reduction} \\
\hline
Height field $h$ & $288$ & $21,\;92.70\% $ & $67,\;76.73\% $\\
\hline
Streamwise field $u$ & $288$ & $116,\;59.72\% $ & $199,\;30.90\% $\\
\hline
Spanwise field $v$ & $288$ & $151,\;47.56\% $ & $212,\;26.38\% $\\
\hline
\end{tabular}
\end{table}
\end{center}
%

\section{Conclusions}

The current study concentrated on a topic of significant interest in fluid dynamics: the identification of a model of reduced computational complexity from numerical code output, based on Koopman operator Theory.
The full model consisted in the Saint-Venent equations model, that have been computed using a Lax-Wendroff finite difference discretization scheme. The Koopman composition operator have been numerically approximated with the algorithm of reduced-order modelling, endowed with a novel criterion of selection of the most influential Koopman modes, based on the modes weights. It automatically selects the most representative Koopman modes, even if they exhibit rapid development with lower amplitudes or are composed of high amplitude fast damped modes.

Two tests were carried out in order to evaluate the algorithm's computing efficiency in order to enhance the reduced-order model precision. It was demonstrated that the model rank may be decreased by up to $92\%$ without compromising model accuracy.

This approach is a useful tool for creating reduced-order models of complex flow fields characterized by non-linear models.


\end{document}